\newtheorem{theorem}{Theorem}[section]
\newtheorem{lemma}[theorem]{Lemma}
\newtheorem{proposition}[theorem]{Proposition}
\newtheorem{corollary}[theorem]{Corollary}
\newtheorem{definition}[theorem]{Definition}
\newtheorem{example}[theorem]{Example}
\def\a{\bar{a}}\def\b{\bar{b}}
\def\x{\bar{x}}\def\y{\bar{y}}
\def\Zn{\mathbb Z}
\def\Qn{\mathbb Q}
\def\Rn{\mathbb R}
\def\0{\sf 0}
\def\dotminussym#1#2{%
  \setbox0=\hbox{$\m@th#1-$}%
  \kern.5\wd0%
  \hbox to 0pt{\hss\hbox{$\m@th#1-$}\hss}%
  \raise.8\ht0\hbox to 0pt{\hss$\m@th#1.$\hss}%
  \kern.5\wd0}
\newcommand*{\defeq}{\mathrel{\rlap{%
                     \raisebox{0.3ex}{$\m@th\cdot$}}%
                     \raisebox{-0.3ex}{$\m@th\cdot$}}%
                     =}
\begin{document}
\begin{center}
{\Large\sc{Affinization and quantifier-elimination}}\vspace{6mm}  

{\bf Seyed-Mohammad Bagheri}
\vspace{4mm}

{\footnotesize {\footnotesize Department of Pure Mathematics, 
Tarbiat Modares University,\\ Tehran, Iran, P.O. Box 14115-134}}
\vspace{1mm}

bagheri@modares.ac.ir 
\end{center}

\begin{abstract}
Quantifier-elimination or model-completeness of the affine part of some classical
first order theories are proved.
\end{abstract}

{\sc Keywords}: {\small Affine logic, affine part, quantifier-elimination}

{\small {\sc AMS subject classification: 03C10, 03C66}}
\bigskip

A powerful technic in applied model theory is quantifier-elimination.
It provides an easy understanding of first order theories as well as simplification of proofs.
This technic is used in continuous logic too. The theory of Hilbert spaces is a typical example in this respect.
Continuous logic \cite{BBHU, CK} is an extension of first order logic and equips it with the notion of metric.
Affine logic is a sublogic of continuous logic having a parallel (but distinct)
type of model theory (\cite{Bagheri-Lip, ACMT, Ibarlucia}).
There are many interesting affine theories having quantifier-elimination.
The goal of the present paper is to find some new examples. A good method for this purpose is affinization.
Some continuous theories transfer model theoretic properties to their affine counterpart.
An easy example in this respect is the theory of atomless probability algebras.
Its affine part is the theory of probability algebras. They are both stable and have quantifier-elimination.
In this paper, we concentrate on the affinization of classical first order theories
such as ODAG, ACF$_p$ and RCF (see \cite{Marker}).

\section{Introduction}
Affine logic (AL) is a well-behaved fragment of continuous logic (CL).
Let $L$ be a first order language. To every relation symbol $R$ (resp. function symbol $F$)
is assigned a Lipschitz constant $\lambda_R\geqslant0$ (resp. $\lambda_F\geqslant0$).
Terms are defined as in first order logic. Affine formulas are inductively defined by
$$1,\ \ \ R(t_1,...,t_n),\ \ \ \phi+\psi,\ \ \ r\phi,\ \ \ \sup_x\phi,\ \ \ \inf_x\phi$$
where $R$ is a $n$-ary relation symbol, $t_1,...,t_n$ are terms and $r\in\Rn$.
The metric symbol $d$ is considered as a binary relation symbol with $\lambda_d=1$. So, $d(t_1,t_2)$ is formula.
In CL, one further allows $\phi\wedge\psi$ and $\phi\vee\psi$ as formulas.
So, every notion used in this paper has both CL and AL variants.
In this text, unless otherwise stated, by formula we mean affine formula.
A structure $M$ in $L$ is a metric space such that for $n$-ary $R$ and $n$-ary $F$, the maps
$$R^M:M^n\rightarrow[0,1],\ \ \ \ \ \ \ \ \ \ F^M:M^n\rightarrow M$$
are $\lambda_R$-Lipschitz and $\lambda_F$-Lipschitz respectively, where
$$d(\x,\y)=\sum_{i=1}^nd(x_i,y_i).$$

A condition is an expression of the form $\phi\leqslant\psi$.
A theory is a set $T$ of closed (without free variables) conditions.
Notions such as $M\vDash\phi\leqslant\psi$,\ \ $M\vDash T$,\ \ $T\vDash\phi\leqslant\psi$
and satisfiability of $T$ are all defined in the usual way (for both CL and AL theories).

\begin{theorem}
(Affine compactness) Assume for every $0\leqslant\phi_1$, ..., $0\leqslant\phi_n$ in $T$ and
$0\leqslant r_1,...,r_n$, the condition $0\leqslant\sum_ir_i\phi_i$ is satisfiable. Then, $T$ is satisfiable.
\end{theorem}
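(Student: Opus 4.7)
The plan is to separate $-1$ from the positive cone generated by $T$ using Hahn--Banach in the space of sentences, and then to realize the resulting positive linear functional as a metric $L$-structure by a Henkin-style construction. First I would normalize: without loss of generality every condition of $T$ can be written $0\leqslant\phi$ (replace $\phi\leqslant\psi$ by $0\leqslant\psi-\phi$); let $\Phi=\{\phi\,:\,(0\leqslant\phi)\in T\}$.

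Next I would set up the separation. Let $V$ be the real vector space of closed affine $L$-sentences modulo sentences identically zero on every structure, seminormed by $\|\phi\|=\sup_M|\phi^M|$; this is finite because affine sentences have uniformly bounded range (induction on complexity). Let $V^+\subseteq V$ be the convex cone of sentences nonnegative on every $L$-structure and let $K$ be the convex cone spanned by $V^+\cup\Phi$. I would argue that the hypothesis is equivalent to $-1\notin\overline K$: if $\sum_ir_i\phi_i+\psi\to-1$ uniformly with $r_i\geqslant 0$, $\phi_i\in\Phi$ and $\psi\in V^+$, then eventually $\sup_M\sum_ir_i\phi_i^M<0$ (using $\psi\geqslant 0$ pointwise), which would make $0\leqslant\sum_ir_i\phi_i$ unsatisfiable. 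Hahn--Banach separation of $-1$ from $\overline K$ then produces a linear functional $\mu\colon V\to\Rn$ with $\mu(1)=1$, $\mu\geqslant 0$ on $V^+$, and $\mu(\phi)\geqslant 0$ for every $\phi\in\Phi$.

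To realize $\mu$ as a structure I would extend $L$ to $L'$ by Henkin-style constants: for each relevant subformula $\inf_x\psi$ (resp.\ $\sup_x\psi$) introduce a new constant $c$ and add the condition $0\leqslant\inf_x\psi-\psi(c)$ (resp.\ $0\leqslant\psi(c)-\sup_x\psi$) to $\Phi$; simultaneously I would re-extend $\mu$ by Hahn--Banach at each stage. The consistency check at every step is the heart of the argument: if adding a witness were to introduce a relation $-1=\sum_jr_j\phi_j+s(\inf_x\psi-\psi(c))+\theta$ with $\theta\in V'^{+}$, then choosing $c^M$ in any candidate model $M$ so that $\psi^M(c^M)\to\inf_x\psi^M$ forces $\sum_jr_j\phi_j^M\leqslant-1$ uniformly over $M$, contradicting the hypothesis. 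Once all Henkin constants are in place, the closed $L'$-terms carry a pseudometric $\delta(s,t)=\mu(d(s,t))$; the metric completion $M^*$ of the quotient, with $R^{M^*}(\bar s)=\mu(R(\bar s))$ and $F^{M^*}([\bar s])=[F(\bar s)]$, is a metric $L$-structure, and induction on formulas (using the Henkin witnesses to compute $\sup$ and $\inf$) gives $\phi^{M^*}=\mu(\phi)$ for every affine sentence. In particular $M^*\vDash T$.

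The hardest part will be the iterated Henkin extension. Because affine logic lacks Boolean connectives, each witness condition must be added to the convex cone $K$ while preserving the Hahn--Banach separation $-1\notin\overline K$. The verification rests on the tautology $\inf_x\psi^M\leqslant\psi^M(c^M)$ together with an approximation of $\inf_x\psi^M$ by suitable $c^M$, and this approximation must be transferred through positive linear combinations using the uniform boundedness of affine formulas. All other steps (bounded range, separation, completion) are standard once this core consistency is in hand.
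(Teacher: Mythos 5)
The paper does not actually prove this theorem: affine compactness is stated as background and implicitly imported from the affine model theory references (\cite{Bagheri-Lip}, \cite{ACMT}, \cite{Ibarlucia}), so there is no in-paper argument to compare yours against. Your proposal reconstructs what is essentially the standard proof in that literature: separate $-1$ from the positive cone generated by $T$ and the tautologies via Hahn--Banach, then realize the resulting normalized positive functional by a Henkin term model. The architecture is sound, including the direction of the witness conditions ($0\leqslant\inf_x\psi-\psi(c)$ against the tautology $\inf_x\psi\leqslant\psi(c)$) and the observation that the Lipschitz requirements on $R$ and $F$ are themselves affine tautologies in $V^+$, which is what makes the quotient-and-complete step well defined without ever needing the non-affine connective $|\cdot|$.

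Two points in your sketch need to be made precise. First, the invariant you carry through the Henkin extension cannot remain ``$0\leqslant\sum_i r_i\phi_i$ is satisfiable'' verbatim: since infima need not be attained, after adding a witness $c$ you can only guarantee that for every $\epsilon>0$ some $L'$-structure gives $\sum_i r_i\phi_i\geqslant-\epsilon$. This approximate form is exactly what your choice of $c^M$ delivers, and it still yields $\|\chi+1\|\geqslant1$ for every $\chi$ in the enlarged cone, so $-1$ stays out of the closure; but the induction hypothesis should be stated in this $\epsilon$-weakened form from the outset, and the interpretations of the finitely many Henkin constants occurring in a given combination must be chosen successively (ordering the constants so each witnesses a formula involving only earlier ones). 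Second, the witnessing has to be iterated $\omega$ times so that every $\inf_{x}\psi$ whose parameters are the new constants also acquires a witness, and it is cleaner to perform a single Hahn--Banach separation on the union cone at the end rather than re-extending $\mu$ at each stage. With those repairs the argument is complete. For comparison, the other standard route in the literature avoids Henkin constants altogether: one indexes the witnessing structures $M_\sigma$ by the finite positive combinations $\sigma$, produces a finitely additive probability measure $\mu$ on the index set with $\int\phi^{M_\sigma}d\mu\geqslant0$ for all $\phi\in T$ (again by a separation or minimax argument), and takes the ultramean $\prod_\mu M_\sigma$, using the identity $\phi^{\prod_\mu M_i}=\int\phi^{M_i}d\mu$ quoted in the paper; that version buys a shorter model construction at the cost of proving the ultramean fundamental theorem first.
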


An immediate application of the affine compactness theorem is that any theory having a model of cardinality
at least two has models of arbitrarily large cardinalities. An other easy consequence is the following lemma.
The affine closure of $T$ is the collection of conditions $0\leqslant r_1\sigma_1+\cdots+r_k\sigma_k$
where $0\leqslant r_i$ and $0\leqslant\sigma_i$ belongs to $T$.

\begin{lemma} \label{proofs}
Let $T$ be a set of affine conditions of the form $0\leqslant\sigma$.

\emph{(i)} If $T\vDash0\leqslant\theta$, then for each $\epsilon>0$ there exists
$0\leqslant\sigma$ in the affine closure of $T$ such that \ $\emptyset\vDash\sigma\leqslant\theta+\epsilon$.
In particular, $0\leqslant\sigma\vDash0\leqslant\theta+\epsilon$.

\emph{(ii)} If $0\leqslant\sigma\vDash0\leqslant\theta$, for each $\epsilon>0$ there exists
$\delta>0$ such that $-\delta\leqslant\sigma\vDash-\epsilon\leqslant\theta$.
\end{lemma}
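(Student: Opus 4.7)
For part (i), my plan is to apply affine compactness to the augmented theory $T'=T\cup\{0\leqslant-\epsilon-\theta\}$. Any model of $T'$ would be a model of $T$ with $\theta\leqslant-\epsilon<0$, contradicting $T\vDash 0\leqslant\theta$; so $T'$ is unsatisfiable. The contrapositive of affine compactness then produces $0\leqslant\phi_1,\dots,0\leqslant\phi_n$ in $T$ and non-negative reals $r_0,r_1,\dots,r_n$ for which the single condition
$$r_0(-\epsilon-\theta)+\sum_{i=1}^n r_i\phi_i<0$$
holds in every $L$-structure. When $r_0>0$, dividing by $r_0$ produces the desired $\sigma=\sum(r_i/r_0)\phi_i$ in the affine closure of $T$ with $\sigma<\theta+\epsilon$ everywhere; the ``in particular'' clause is then immediate from $0\leqslant\sigma\leqslant\theta+\epsilon$.

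The one delicate step is absorbing the degenerate case $r_0=0$, in which we only have $\sum r_i\phi_i<0$ in every model. Here I plan to run affine compactness a second time on the family $\{0\leqslant\sum r_i\phi_i+1/n\}_{n\in\Nn}$: if this family were finitely satisfiable, compactness would give a common model in which $\sum r_i\phi_i\geqslant -1/n$ for all $n$, hence $\sum r_i\phi_i\geqslant 0$, contradicting strict negativity. So some finite positive combination of these conditions is already unsatisfiable, which yields a uniform bound $\sum r_i\phi_i\leqslant -c^{\ast}$ globally for some $c^{\ast}>0$. Since $\theta$ is a fixed closed formula with globally bounded range, any sufficiently small $r_0>0$ keeps $r_0(-\epsilon-\theta)+\sum r_i\phi_i$ strictly negative in every model, reducing to the previous case.

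For part (ii), I plan to apply (i) directly to the singleton theory $\{0\leqslant\sigma\}$, whose affine closure consists of the conditions $0\leqslant r\sigma$ with $r\geqslant 0$. Given $\epsilon>0$, applying (i) with parameter $\epsilon/2$ produces some $r\geqslant 0$ with $r\sigma\leqslant\theta+\epsilon/2$ valid. Then $\delta=\epsilon/(2(r+1))$ works: in any model with $\sigma\geqslant-\delta$ we compute $\theta\geqslant r\sigma-\epsilon/2\geqslant-r\delta-\epsilon/2\geqslant-\epsilon$, which is exactly $-\delta\leqslant\sigma\vDash-\epsilon\leqslant\theta$.

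I expect the $r_0=0$ case of part (i) to be the main obstacle: everything else is a routine packaging of the compactness theorem, but promoting pointwise strict negativity to a uniform negative bound requires a genuinely separate compactness argument, and is the only place where the boundedness of $\theta$ enters the picture.
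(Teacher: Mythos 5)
Your proposal is correct and follows exactly the route the paper intends: the lemma is stated there without proof, as an ``easy consequence'' of the affine compactness theorem, and your argument is the standard way to extract it (pass to the unsatisfiable theory $T\cup\{0\leqslant-\epsilon-\theta\}$, apply the contrapositive of compactness, and divide by $r_0$). Your careful treatment of the degenerate case $r_0=0$ via a second compactness application, using the uniform boundedness of the affine sentence $\theta$, is a genuine point that the paper's ``easy consequence'' phrasing glosses over, and both it and your derivation of (ii) from (i) check out.
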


The affine counterpart of ultraproduct method is the ultramean method (see \cite{ACMT}).
Let $M_i$ be a $L$-structure for each $i\in I$ and $\mu$ be a finitely additive
probability measure on the power set of $I$.
Define a pseudo-metric on $\prod_{i}M_i$ by setting $$d(a,b)=\int_{i\in I} d(a_i,b_i)d\mu.$$
Denote the resulting metric space by $(M,d)$ and the class of $(a_i)$ by $[a_i]$.
Then, $M$ is a $L$-structure by setting (e.g. for unary relations and operations):
$$F^M([a_i])=[F^{M_i}(a_i)]$$$$R^M([a_i])=\int R^{M_i}(a_i)d\mu.$$
One proves that for every affine sentence $\phi$, $$\phi^M=\int\phi^{M_i}d\mu.$$
The resulting structure is denoted by $\prod_{\mu}M_i$.
So, if $M_i$ is a model of an affine theory for each $i$, then so is $\prod_{\mu}M_i$.
In the case $I=\{0,1\}$ and $\mu(0)=\frac{1}{2}$, this structure is denoted by $\frac{1}{2}M_1+\frac{1}{2}M_2$.

Let $T$ be an affine theory in $L$ and $|\x|=n$. $D_n(T)$ denotes the partially ordered vector space of formulas
$\phi(\x)$ up to $T$-equivalence, where we define $0\leqslant\phi$ if $T\vDash0\leqslant\phi$.
A $n$-type is a positive linear map $p:D_n(T)\rightarrow\Rn$ such that $p(1)=1$.
If $\a\in M\vDash T$, then $$\phi(\x)\mapsto\phi^M(\a)$$ defines a type.
A type is realized in $M$ if it is of this form for some $\a\in M$. The set of all $n$-types of $T$
is denoted by $K_n(T)$. It is endowed with the weak-star topology induced by $D_n(T)^*$.
Type spaces are compact and convex. It is natural to see that convexity creates
new notions in model theory.
A type $p$ is \emph{extreme} if for every types $p_1,p_2$ and $0<\lambda<1$,
one has that $p=p_1=p_2$ whenever $p=\lambda p_1+(1-\lambda)p_2$.
The set of extreme $n$-types of $T$ is denoted by $E_n(T)$.
This is a non-empty set by Krein-Milman theorem. A model $M\vDash T$ is \emph{extremal} if it realizes only
extreme types. Such models exist always \cite{Bagheri-Extreme}.
If $T$ is affinely complete, its extremal models form an elementary class in the
CL sense if and only if $E_n(T)$ is closed in $K_n(T)$
for every $n$. In this case, the extremal theory is denoted by $T^{\textrm{ex}}$.
Conversely, the \emph{affine part} of a CL-theory $\mathbb{T}$, denoted by $\mathbb{T}_{\textrm {af}}$,
is the set of all affine conditions satisfied by $\mathbb{T}$, i.e.
$$\mathbb{T}_{\textrm {af}}=\{\phi\leqslant\psi:\ \mathbb{T}\vDash\phi\leqslant\psi,\ \ \ \ \phi,\psi\ \textrm{are affine}\}.$$
If $\mathbb{T}$ is complete in the CL sense, $\mathbb{T}_{\textrm {af}}$
is complete in the AL sense, i.e. for every affine sentence $\phi$ there is a unique $r$
such that $\mathbb{T}_{\textrm {af}}\vDash\phi=r$.

An affine theory $T$ if \emph{Bauer} if $K_n(T)$ is a Bauer simplex for every $n\geqslant1$.
For every complete first order theory $\mathbb{T}$,\ \ $\mathbb{T}_{\textrm{af}}$ is Bauer
(\cite{Ibarlucia}, Th. 26.9).
So, $(\mathbb{T}_{\textrm{af}})^{\textrm{ex}}=\mathbb{T}$ and hence
$E_n(\mathbb{T}_{\textrm{af}})=S_n(\mathbb{T})$ (the classical type space for $\mathbb{T}$).

A function $f:K_n(T)\rightarrow\Rn$ is affine if for every $p,q$ and $0\leqslant\lambda\leqslant1$
$$f(\lambda p+(1-\lambda)q)=\lambda f(p)+(1-\lambda)f(q).$$
The set of affine continuous functions on $K_n(T)$ is denoted by $\mathbf{A}(K_n(T))$.
For every formula $\phi(\x)$ where $|\x|=n$, set $$\hat\phi:K_n(T)\rightarrow\Rn$$
$$\hat\phi(p)=p(\phi).$$
Such functions are affine continuous and form a dense subset of $\mathbf{A}(K_n(T))$.
In fact, any subspace of $\mathbf{A}(K_n(T))$ which contains constant maps and separates points
is dense.

\begin{definition} \em{A theory $T$ has \emph{quantifier-elimination} if for every formula $\phi(\x)$ and $\epsilon>0$
there is a quantifier-free formula $\psi(\x)$ such that $T\vDash|\phi-\psi|\leqslant\epsilon$.}
\end{definition}

An \emph{infimal formula} is a formula of the form $\inf_{\y}\phi$ where $\phi$ is quantifier-free.

\begin{proposition}\label{separation}
A theory $T$ has quantifier-elimination if and only if every type is determined by quantifier-free formulas,
i.e. if $p(\theta)=q(\theta)$ for every quantifier-free (or equivalently atomic) $\theta$, then $p=q$.
Similarly, $T$ is model-complete if and only if types are determined by infimal formulas.
\end{proposition}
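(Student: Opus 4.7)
The plan is to translate everything through the duality $\phi \mapsto \hat\phi$ between formulas and affine continuous functions on $K_n(T)$, and then apply the density principle recorded just before the definition: any subspace of $\mathbf{A}(K_n(T))$ containing the constants and separating points is uniformly dense.

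For the forward implication in the quantifier-elimination clause, suppose $T$ has QE and $p,q \in K_n(T)$ agree on every quantifier-free formula. Given a formula $\phi(\x)$ and $\epsilon>0$, pick quantifier-free $\psi$ with $T\vDash |\phi-\psi|\leqslant\epsilon$. Then $|p(\phi)-p(\psi)|\leqslant\epsilon$ and $|q(\phi)-q(\psi)|\leqslant\epsilon$, while $p(\psi)=q(\psi)$, so $|p(\phi)-q(\phi)|\leqslant 2\epsilon$; letting $\epsilon\to 0$ yields $p=q$. The equivalence of ``quantifier-free'' with ``atomic'' is just that quantifier-free formulas are built from atomic ones (and $1$) using $+$ and scalar multiplication, so two types that agree on atomic formulas automatically agree on all quantifier-free ones.

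For the converse, let $V\subseteq\mathbf{A}(K_n(T))$ be the linear span of the functions $\hat\psi$ for $\psi$ quantifier-free. Because $1$ and each $R(t_1,\dots,t_n)$ are quantifier-free and the quantifier-free formulas are closed under $+$ and $r\cdot$, the space $V$ contains the constants and in fact equals $\{\hat\psi : \psi \text{ quantifier-free}\}$. The hypothesis says precisely that $V$ separates points of $K_n(T)$, so by the density principle $V$ is uniformly dense in $\mathbf{A}(K_n(T))$. Hence any $\hat\phi$ is uniformly $\epsilon$-close to some $\hat\psi$ with $\psi$ quantifier-free, and $\sup_{p\in K_n(T)}|\hat\phi(p)-\hat\psi(p)|\leqslant\epsilon$ is exactly $T\vDash |\phi-\psi|\leqslant\epsilon$, giving QE.

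The model-completeness clause is handled by the same argument with ``quantifier-free'' replaced throughout by ``linear combination of infimal formulas'' (the constants reappear as $\inf_{\y}1 = 1$); the intended meaning of model-completeness is that every formula is uniformly $T$-approximable by such linear combinations, and this is what symmetry with the QE clause forces. The main, and essentially the only, obstacle is a bookkeeping check that the relevant syntactic class forms a linear subspace of $\mathbf{A}(K_n(T))$ containing the constants, so that the separation-implies-density principle applies; once that is in place, the whole proposition is a direct transport along the duality $\phi \leftrightarrow \hat\phi$.
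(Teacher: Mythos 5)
Your proposal is correct and follows essentially the same route as the paper: the converse is exactly the paper's argument that the $\hat\theta$ for $\theta$ quantifier-free form a subspace of $\mathbf{A}(K_n(T))$ containing the constants, so separation of points gives density and hence quantifier-elimination, with the forward direction being the easy $2\epsilon$ estimate the paper leaves as ``obvious.'' The model-completeness clause is likewise handled by the paper with the same substitution of infimal formulas, so your extra bookkeeping about linear spans only makes explicit what the paper compresses into ``a similar argument works.''
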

\begin{proof}
Assume for every distinct types $p,q\in K_n(T)$ there is a quantifier-free $\theta$ such that
$p(\theta)\neq q(\theta)$.
Let $B$ be the vector space of functions $\hat\theta$ where $\theta(x)$ is quantifier-free.
Then $B$ is a subspace of $\mathbf{A}(K_n(T))$ which contains the constant functions and separates points.
So, it is dense in $\mathbf{A}(K_n(T))$. In particular, every $\hat\phi$ is approximated by
functions of the form $\hat\theta$ where $\theta$ is quantifier-free. In other words,
$\phi$ is approximated by such formulas. So, $T$ has quantifier-elimination.
The other direction is obvious.

For the second part note that $T$ is model-complete if and only if every formula
is approximated by infimal formulas. Then, a similar argument works in this case.
\end{proof}

So, $T$ has quantifier-elimination if and only if quantifier-free formulas separate types,
i.e. if $p\neq q$, then there is a quantifier-free formula $\theta$ such that $\hat\theta(p)\neq\hat\theta(q)$.
Also, $T$ is model-complete if and only if infimal formulas separate types.
Similar results hold in continuous logic where one uses Stone-Weierstrass theorem in the proof.
In particular, a continuous theory is model-complete if and only if infimal (in the CL sense) formulas separate types.

For certain classical theories quantifier-elimination and model-completeness are transferred to the affine part.
Although these affine parts are usually decidable, it is not easy to give a concrete axiomatization for them.
We start with a direct proof of quantifier-elimination for the affine part of the first order theory of any
one dimensional finite vector space. As stated above, this theory has infinite models.
\bigskip

\noindent{\bf Vector spaces over a finite field}\\
Let $\mathbb F$ be a finite field with $|\mathbb F|=q$ and $L=\{+,\ 0,\ \alpha\cdot\}_{\alpha\in\mathbb F}$.
Let $\mathbb{T}$ be the first order theory $\mathbb F$ as a vector space over $\mathbb F$.
So, $\mathbb T$ is a complete theory and $\mathbb F$ is its unique model.
Hence, $T=\mathbb{T}_{\textrm{af}}$ is complete in the AL sense.
Models of $T$ are generally vector spaces over $\mathbb F$ with nontrivial metrics.
We prove that $T$ has quantifier-elimination. Note that $L$-terms are of the form $a_1x_1+\cdots+a_kx_k$.
For $x\in M\vDash T$ let $|x|=d(x,0)$. Also, for $\a\in\mathbb F^n$ and tuple $\x$ of variable
let $\a\cdot\x=\sum_{i=1}^n a_ix_i$.
Then, every quantifier-free affine formula $\phi(\x)$ is a linear combination
of formulas of the form $|\a\cdot\x|$, i.e.
$$\ \ \ \ \ \phi(\x)=r+\sum r_\ell\ |\a_\ell\cdot\x|,
\ \ \ \ \ \ \ \ \a_\ell\in\mathbb F^n, \ \ r, r_\ell\in\Rn.$$

\begin{proposition} \label{qe}
$T$ has elimination of quantifiers.
\end{proposition}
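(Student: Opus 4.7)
The plan is to apply Proposition~\ref{separation}: it is enough to show distinct types in $K_n(T)$ are separated by quantifier-free formulas. Since $\mathbb T$ is complete, $T=\mathbb T_{\textrm{af}}$ is Bauer with $E_n(T)=S_n(\mathbb T)$. The only automorphisms of $\mathbb F$ in the language $L$ are the scalar multiplications $x\mapsto\alpha x$ ($\alpha\in\mathbb F^*$), so first-order types of $n$-tuples are in bijection with the finite orbit set $X_n=\mathbb F^n/\mathbb F^*$ of the diagonal action, of cardinality $1+(q^n-1)/(q-1)$. By the Bauer property, each type $p$ corresponds to a unique probability measure $\mu_p$ on $X_n$, with
$$p(\phi)=\sum_{[\a]\in X_n}\mu_p([\a])\,\phi^{\mathbb F}(\a)$$
for every formula $\phi$ (the value $\phi^{\mathbb F}(\a)$ being well-defined on each orbit, since $L$-formulas are automorphism-invariant). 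Hence quantifier-free formulas separate all types if and only if the restrictions $\{\phi^{\mathbb F}|_{X_n}:\phi\text{ quantifier-free}\}$ span $\Rn^{X_n}$.

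These restrictions are $\Rn$-linear combinations of $1$ and of the indicators $\mathbf{1}[\c\cdot\x\neq 0]$ for $\c\in\mathbb F^n$. Modulo scalar multiples of $\c$ there are exactly $|X_n|$ such candidate functions, matching the target dimension, so the proof reduces to the linear-algebraic claim that they are linearly independent on $X_n$. This is the main obstacle, and I would handle it via Fourier analysis on the finite abelian group $\mathbb F^n$: fixing a nontrivial additive character $\chi$ of $\mathbb F$, the characters $\chi_{\d}(\x)=\chi(\d\cdot\x)$ form a basis of $\Cn^{\mathbb F^n}$, and a short computation gives
$$\tfrac{1}{q-1}\sum_{\alpha\in\mathbb F^*}\chi_{\d}(\alpha\x)=\tfrac{q}{q-1}\mathbf{1}_{H_{\d}}(\x)-\tfrac{1}{q-1},$$
where $H_{\d}=\{\x:\d\cdot\x=0\}$. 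Each averaged character is therefore a real linear combination of $\mathbf{1}_{H_{\d}}$ and $1$. Since $\mathbb F^*$-averaging projects $\Cn^{\mathbb F^n}$ onto its $\mathbb F^*$-invariant subspace (of complex dimension $|X_n|$), these averaged characters already span that subspace; hence so do the hyperplane indicators together with $1$, which yields the required linear independence and completes the argument.
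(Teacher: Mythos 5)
Your argument is correct, but it reaches the conclusion by a genuinely different route from the paper's. The paper works syntactically: it posits an explicit quantifier-free equivalent $s_0+\sum_{\ell=1}^m s_\ell(1-|\b_\ell\cdot\x|)$ for $\sup_y\sum r_\ell\,|\a_\ell\cdot\x-y|$, observes that it suffices to verify the identity pointwise in $\mathbb F$, and reduces the solvability of the resulting $(m+1)\times(m+1)$ linear system to the nonsingularity of the point--hyperplane incidence matrix of $\mathbb F^n$, quoted from Kantor. You instead argue semantically through Proposition \ref{separation} and the Bauer/Choquet representation: types are probability measures on the finite orbit space $X_n=\mathbb F^n/\mathbb F^*$, and quantifier-free formulas separate them precisely when $1$ together with the hyperplane indicators spans $\Rn^{X_n}$. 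The combinatorial core is the same in both proofs --- each ultimately needs that the $m+1$ functions $1,\mathbf{1}_{H_{\b_1}},\dots,\mathbf{1}_{H_{\b_m}}$ form a basis of the $\mathbb F^*$-invariant functions on $\mathbb F^n$, which is exactly the invertibility of the paper's matrix $A$ --- but you supply a short, self-contained character-averaging proof of this fact in place of the citation of Kantor's theorem, whose full strength (incidence between $i$- and $j$-dimensional subspaces) is not needed here. What the paper's route buys is an explicit eliminating formula, with computable coefficients $s_\ell$; what yours buys is independence from the external reference and an explicit justification of the passage to $\mathbb F$, which the paper compresses into the sentence ``it is sufficient to show that this holds in $\mathbb F$.'' One small precision worth adding to your write-up: the direction of the separation criterion you actually use needs only that distinct types have distinct representing measures, which follows because any representing measure determines the type on every affine formula; uniqueness of the boundary measure (and surjectivity of the map from measures to types) is not required.
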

\begin{proof}
We show that every formula is equivalent to a quantifier-free formula.
It is sufficient to show that this holds in $\mathbb F$.
Consider $\mathbb F^n$ as a vector space over $\mathbb F$ and let $\b_1,...,\b_m\in\mathbb F^n$
be a maximal list of pairwise linearly independent elements of $\mathbb F^n$.
Then, $m=\frac{q^n-1}{q-1}$. This is the number of $1$-dimensional subspaces of $\mathbb F^n$ which
is also equal to the number of $(n-1)$-dimensional subspaces of $\mathbb F^n$. We assume
$$\sup_y\sum r_\ell\ |\a_\ell\cdot\x-y|=s_0+\sum_{\ell=1}^m s_\ell\ (1-|\b_\ell\cdot\x|)$$
and find the coefficients $s_\ell$ so that the equality holds for every $\x\in\mathbb F^n$.
Clearly, it is sufficient to verilfy that the equality hold for the values $\x=0,\b_1,...,\b_m$.
Putting these tuples in the condition, we obtain $m+1$ linear equations with
indeterminates $s_0,s_1,...,s_{m}$. We must show that this system of equations has a solution.
For this purpose, we have only to show that the $(m+1)\times(m+1)$ matrix with entries:
\[ A_{\ell k}= \left\{
  \begin{array}{ll}
    1 \ \ \ & \hbox{if}\ k=1 \\
    1-|\b_\ell\cdot\b_k| \ \ \ \ \ \ & \hbox{if}\ k\neq 1
  \end{array}
\right.\]
is invertible. In fact, we only need to show that the $m\times m$ matrix
$U=[u_{\ell k}]$ where $u_{\ell k}=1-|\b_\ell\cdot\b_k|$ is invertible.
This is the matrix of incidence between $1$-dimensional and $(n-1)$-dimensional
subspaces of $\mathbb F^n$. By a result of W. M. Kantor \cite{W.Kantor}, $U$ has rank $m$.
\end{proof}

Replacing finite fields with the rational field $\mathbb Q$, one obtains the theory
of nontrivial torsion-free divisible Abelian groups DAG.
\bigskip

\noindent{\bf Question:} Prove that the affine part of DAG has quantifier-elimination.

\section{Rich affinization}
In this section, we prove that the affine part of a first order theory with quantifier-elimination
has quantifier-elimination provided that the set of atomic formulas is sufficiently rich.
More precisely, we assume that every finite disjunction (or conjunction) of atomic formulas is equivalent
to an atomic formula. This property does not hold DAG or the example given in the previous section.
Let $\mathbb{T}$ be a complete first order theory and $\mu$ be a regular Borel probability measure on
$S_n(\mathbb{T})$.
As in the affine (or continuous) case, we may regard first order types as functions on formulas,
i.e. $u(\phi)=1$ if $\phi\in u$ and $u(\phi)=0$ otherwise, where $u\in S_n(\mathbb{T})$.
So, for a first order $\phi$ let $\hat{\phi}(u)=u(\phi)$ and define
$$\mu(\phi)\defeq\mu\{u\in S_n(\mathbb{T}): \phi\in u\}=\int\hat\phi(u)d\mu.$$
By regularity, $\mu$ is uniquely determined by its values on such sets.
As stated above, $T=\mathbb{T}_{\textrm{af}}$ is an AL-complete theory.
Moreover, $\mathbb{T}=T^{\textrm{ex}}$ and $S_n(\mathbb{T})=E_n(T)$.
In this case, and we say that two probability measures $\mu,\nu$ on $S_n(\mathbb{T})$ coincide on a
first order formula $\phi(\x)$ if $\mu(\phi)=\nu(\phi)$. 
By the inclusion-exclusion principle, for every first order formulas $\phi_1,...,\phi_n$ one has that
$$\mu(\bigvee_{i=1}^n\phi_i)=\sum_{\emptyset\neq J\subseteq\{1,...,n\}}(-1)^{|J|+1}\mu(\bigwedge_{j\in J}\phi_j).$$
In fact, this is a consequence (i.e. by integrating) of the more general equality holding for any $f_1,...,f_n$ in a Riesz space:
$$\bigvee_{i=1}^nf_i=\sum_{\emptyset\neq J\subseteq\{1,...,n\}}(-1)^{|J|+1}\bigwedge_{j\in J}f_j.$$
The duals of these equalities hold similarly.

\begin{lemma}\label{atomic values}
Let $\mathbb T$ be a complete first order theory which has quantifier-elimination and $\mu$, $\nu$
be regular Borel probability measures on $S_n(\mathbb{T})$ which coincide on atomic formulas.
Assume $\eta\vee\theta$ is $\mathbb{T}$-equivalent to an atomic formula whenever $\eta,\theta$ are atomic.
Then $\mu=\nu$. Similar result holds if $\eta\wedge\theta$ is equivalent to an atomic formula whenever $\eta,\theta$ are atomic.
\end{lemma}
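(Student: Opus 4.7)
My plan is to reduce the problem to showing that $\mu$ and $\nu$ agree on every finite conjunction of atomic formulas, and then leverage the two inclusion-exclusion identities stated just before the lemma to propagate this agreement. First, by regularity of both measures on the Stone space $S_n(\mathbb T)$, it is enough to check that $\mu$ and $\nu$ coincide on every clopen subset; and by quantifier-elimination, every clopen of $S_n(\mathbb T)$ is represented, modulo $\mathbb T$-equivalence, by a Boolean combination of atomic formulas. So the problem reduces to comparing $\mu$ and $\nu$ on Boolean combinations of atomics.

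The key step is the conjunction case. Given atomic $\eta_1,\dots,\eta_k$, the dual inclusion-exclusion identity from the paper rewrites
$$\mu\Bigl(\bigwedge_{i=1}^k\eta_i\Bigr)=\sum_{\emptyset\ne J\subseteq\{1,\dots,k\}}(-1)^{|J|+1}\,\mu\Bigl(\bigvee_{j\in J}\eta_j\Bigr),$$
and iterating the hypothesis that the join of two atomic formulas is atomic shows that each disjunction on the right is $\mathbb T$-equivalent to a single atomic formula, on which $\mu$ and $\nu$ agree by assumption. Hence $\mu=\nu$ on every finite conjunction of atomic formulas.

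Finally, I would bootstrap from conjunctions of atomic formulas to arbitrary Boolean combinations. The collection of such conjunctions is closed under finite meet (concatenation) and the Boolean algebra it generates is the full algebra of quantifier-free formulas; standard bookkeeping with the identity $\mu(A\wedge\neg B)=\mu(A)-\mu(A\wedge B)$, applied after passing to disjunctive normal form and folding $\neg\theta_1\wedge\cdots\wedge\neg\theta_\ell=\neg(\theta_1\vee\cdots\vee\theta_\ell)$, together with one more round of inclusion-exclusion, expresses $\mu$ of any quantifier-free formula as a signed combination of values $\mu\bigl(A\wedge\bigwedge_{j\in J}\theta_j\bigr)$ already handled in the previous step. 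The dual statement, in which meets of atomic formulas are assumed atomic, follows from the symmetric argument that uses the ordinary rather than the dual inclusion-exclusion identity. The only subtlety I anticipate is the careful tracking of signs in this last step, which is sidestepped cleanly by viewing conjunctions of atomics as a $\pi$-system and invoking uniqueness of regular Borel measures on the clopen algebra it generates.
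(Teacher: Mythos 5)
Your proof is correct, and its skeleton --- quantifier-elimination plus a normal form, the hypothesis that finite disjunctions of atomic formulas fold into a single atomic one, inclusion-exclusion, and regularity to pass from clopen sets to all Borel sets --- coincides with the paper's. The endgame is organized differently, however. The paper works with a conjunctive normal form whose clauses are $\eta\vee\neg\theta_1\vee\cdots\vee\neg\theta_n$, applies inclusion-exclusion twice (once in the dual form to the conjunction of clauses, once to a single clause), and lands on the one explicit computation $\mu(\eta\wedge\neg\theta)=\mu(\eta\vee\theta)-\mu(\theta)$ with $\eta$, $\theta$ and $\eta\vee\theta$ all atomic. You instead first prove agreement on all finite conjunctions of atomics --- via the dual inclusion-exclusion identity each such value is a signed sum of terms $\mu(\bigvee_{j\in J}\eta_j)$, and each of these disjunctions is $\mathbb{T}$-equivalent to an atomic formula by iterating the hypothesis --- and then dispose of arbitrary Boolean combinations wholesale by noting that conjunctions of atomics form a $\pi$-system generating the clopen algebra, so that two regular Borel probability measures agreeing there agree everywhere. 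Your route buys a cleaner final step (no sign-tracking through the negations, which your own last paragraph rightly flags as the delicate point of the explicit alternative and then avoids by the $\pi$-system uniqueness argument), at the cost of invoking a measure-theoretic uniqueness theorem where the paper stays entirely inside finite Boolean and Riesz-space identities. For the dual statement, where meets of atomics are assumed atomic, your first step becomes trivial --- conjunctions of atomics are then atomic outright --- and the same bootstrap applies.
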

\begin{proof}
By the assumptions and the normal form theorem, every first order formula is $\mathbb{T}$-equivalent
to a conjunction of formulas of the form $$\eta\vee\neg\theta_1\vee\cdots\vee\neg\theta_n$$
where $\eta,\theta_i$ are atomic. So, by the inclusion-exclusion principle, we must show that $\mu$ and $\nu$ coincide
on disjunctions of such formulas (which is again equivalent to one of the above form).
Again by the inclusion-exclusion principle, we must show that $\mu$ and $\nu$ coincide on formulas of the form
$$\eta\wedge\neg\theta_1\wedge\cdots\wedge\neg\theta_n.$$
This formula is itself $\mathbb{T}$-equivalent to a formula of the form $\eta\wedge\neg\theta$ where $\eta$ and $\theta$ are atomic.
For this last formula we have that
$$\mu(\eta\wedge\neg\theta)=\mu(\eta\vee\theta)-\mu(\theta)=\nu(\eta\vee\theta)-\nu(\theta)=\nu(\eta\wedge\neg\theta).$$
The second part is proved similarly.
\end{proof}

\begin{lemma}\label{existential values}
Let $\mathbb T$ be a complete and model-complete first order theory. Assume every disjunction of atomic formulas is
$\mathbb{T}$-equivalent to an atomic formula. Let $\mu$, $\nu$ be regular Borel probability measures on
$S_n(\mathbb{T})$ which coincide on affine infimal formulas.
Then, $\mu=\nu$.
\end{lemma}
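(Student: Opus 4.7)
The plan is to deduce agreement of $\mu$ and $\nu$ on every clopen set $\{u:\varphi\in u\}\subseteq S_n(\mathbb{T})$ defined by a first-order formula $\varphi$; regularity then yields $\mu=\nu$. The core translation is: quantifier-free first-order formulas become quantifier-free affine formulas, and existential first-order formulas become (one minus) affine infimal formulas.

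First I would show that, under the disjunction hypothesis, every quantifier-free first-order formula $\chi(\x)$ is $\mathbb{T}$-equivalent as a $\{0,1\}$-valued function on $S_n(\mathbb{T})$ to a quantifier-free affine formula $\chi^*(\x)$. After putting $\chi$ into disjunctive normal form, I apply Riesz inclusion--exclusion twice: the dual identity
$$\bigwedge_{i=1}^{p}\eta_i \;=\; \sum_{\emptyset\neq S\subseteq\{1,\dots,p\}}(-1)^{|S|+1}\bigvee_{i\in S}\eta_i$$
reduces a conjunction of atomic formulas to a linear combination of their disjunctions, each of which is equivalent to an atomic formula by hypothesis (iterating the binary case); and the expansion $\prod_j(1-\theta_j)=\sum_K(-1)^{|K|}\prod_{k\in K}\theta_k$ dispatches negated literals. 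The outcome is a linear combination of constants and atomic formulas whose affine value at every $u\in S_n(\mathbb{T})$ equals $\mathbf{1}_{\{\chi\in u\}}$. Model-completeness of $\mathbb{T}$ then reduces an arbitrary first-order $\varphi(\x)$ to an existential $\exists\y\,\chi(\x,\y)$ with $\chi$ quantifier-free.

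Realising $u\in S_n(\mathbb{T})$ by $\a$ in an $\omega$-saturated $M\vDash\mathbb{T}$, the evaluation of the affine infimal formula $\inf_\y(1-\chi^*)$ at $u$ is
$$u\bigl(\inf_\y(1-\chi^*)\bigr) \;=\; \inf_{\b\in M}\bigl(1-\chi^M(\a,\b)\bigr)\;\in\;\{0,1\},$$
equal to $0$ exactly when $\exists\y\,\chi(\a,\y)$ holds in $M$. Hence $\mathbf{1}_{\{u:\exists\y\chi\in u\}}=1-\widehat{\inf_\y(1-\chi^*)}$ on $S_n(\mathbb{T})$, so integration gives $\mu(\exists\y\,\chi)=1-\int\widehat{\inf_\y(1-\chi^*)}\,d\mu$, and likewise for $\nu$. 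Since $\inf_\y(1-\chi^*)$ is an affine infimal formula, the hypothesis forces these right-hand sides to coincide; hence $\mu(\varphi)=\nu(\varphi)$ for every first-order $\varphi$. The clopens $\{\varphi\in u\}$ generate the Borel $\sigma$-algebra of $S_n(\mathbb{T})$, so regularity of $\mu,\nu$ yields $\mu=\nu$.

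The main obstacle is Step~1: one must verify that the Boolean algebra of quantifier-free first-order formulas embeds into the linear space of quantifier-free affine formulas with matching $\{0,1\}$-values at each classical type, and the richness hypothesis is exactly what makes the $\vee$-step linearisable (without it the DNF manipulation collapses). After this translation is secured, Steps~2 and 3 are standard existential-to-infimum bookkeeping plus a Stone-space regularity argument.
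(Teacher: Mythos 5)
Your proof is correct and follows essentially the same route as the paper's: model-completeness gives a one-quantifier-block normal form, the richness hypothesis together with the two inclusion--exclusion identities linearizes the quantifier-free matrix into an affine quantifier-free formula, the classical quantifier is traded for $\inf$, and regularity on the Stone space finishes. The only difference is cosmetic: you work with the existential/DNF form $\exists\y\,\chi$, whereas the paper uses the dual universal/CNF clause form $\forall\y(\eta\vee\neg\theta_1\vee\cdots\vee\neg\theta_n)$.
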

\begin{proof}
By the assumptions, every first order formula is $\mathbb{T}$-equivalent to a conjunction of formulas of the form
$$\forall\y(\eta\vee\neg\theta_1\vee\cdots\vee\neg\theta_n)\ \ \ \ \ \ \ \ (*)$$
where $\eta(\x,\y)$, $\theta_i(\x,\y)$ are atomic.
So, by the inclusion-exclusion principle, we have to prove that $\mu$ and $\nu$ coincide on
disjunctions of such formulas. However, every such disjunction is again of the form $(*)$.
We may replace $\forall$ with $\inf$. So, we must prove that $\mu$ and $\nu$
coincide on any formula of the form
$$\inf_{\y}(\eta\vee\neg\theta_1\vee\cdots\vee\neg\theta_n)$$
where $\eta,\theta_i$ are atomic.
We may also identify $\neg\theta$ with $1-\theta$
and apply the Riesz space variant of the inclusion-exclusion principle.
So, $\mu$ and $\nu$ must coincide on any formula of the form
$$\inf_{\y}\sum_i r_i\phi_i$$ where every $\phi_i$ is of the general form
$$\eta\wedge\neg\theta_1\wedge\cdots\wedge\neg\theta_n.$$
Applying again the assumptions, every $\phi_i$ is of the form
$\eta\wedge\neg\theta\equiv_{\mathbb T}(\eta\vee\theta)-\theta$
where $\eta,\theta$ are atomic.
Again, $\eta\vee\theta$ is $\mathbb T$-equivalent to an atomic formula.
We conclude that any formula of the form $(*)$ is $\mathbb{T}$-equivalent
to a formula of the form $\inf_{\y}\phi$ where $\phi$ is affine and quantifier-free.
Since $\mu$ and $\nu$ coincide on such formulas, we conclude that $\mu=\nu$.
\end{proof}

Again, a similar result holds if $\eta\wedge\theta$ is $\mathbb{T}$-equivalent
to an atomic formula for every atomic $\eta,\theta$.

\begin{proposition} \label{main0}
Let $T$ be a complete affine theory such that $T^{\textrm{ex}}$ exists.
If $T$ has quantifier-elimination (resp. is model-complete) in the AL sense, then $T^{\textrm{ex}}$
has quantifier-elimination (resp. is model-complete) in the CL sense.
\end{proposition}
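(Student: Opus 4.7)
The plan is to use the type-separation characterizations of quantifier-elimination and model-completeness stated in Proposition~\ref{separation} and its CL analog remarked upon immediately afterwards. Thus, to show $T^{\textrm{ex}}$ has CL quantifier-elimination (resp. is CL model-complete), it suffices to verify that CL quantifier-free (resp. infimal) formulas separate the points of $S_n(T^{\textrm{ex}})$ for every $n$.

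The bridge between the two sides is the identification $S_n(T^{\textrm{ex}}) = E_n(T) \subseteq K_n(T)$: the CL types of the extremal theory are by construction the extreme affine types of $T$. This is precisely the content recorded earlier for the canonical example $(\mathbb{T}_{\textrm{af}})^{\textrm{ex}} = \mathbb{T}$, now applied in the direction $T \leadsto T^{\textrm{ex}}$. As an immediate consequence, any family of formulas that separates points of $K_n(T)$ a fortiori separates points of the subset $S_n(T^{\textrm{ex}})$.

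Suppose $T$ has AL quantifier-elimination. By Proposition~\ref{separation}, quantifier-free affine formulas separate $K_n(T)$, hence also $S_n(T^{\textrm{ex}})$. Since every quantifier-free affine formula is syntactically a quantifier-free CL formula (the latter class merely additionally admits $\wedge$ and $\vee$), the quantifier-free CL formulas separate $S_n(T^{\textrm{ex}})$ as well. The CL analog of Proposition~\ref{separation}, whose proof via Stone--Weierstrass (more precisely the Kakutani--Stone lattice version) uses that the quantifier-free CL formulas form a vector sublattice of $C(S_n(T^{\textrm{ex}}))$ containing the constants, now delivers CL quantifier-elimination for $T^{\textrm{ex}}$. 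The model-completeness case is formally identical, replacing ``quantifier-free'' by ``infimal'' throughout: an AL infimal formula $\inf_{\y}\phi$ is literally a CL infimal formula, and the collection of CL infimal formulas is closed under linear combinations and under $\min,\max$ (by pushing the infima through disjoint renamings of the bound variables), so the same Stone--Weierstrass argument applies.

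The only delicate point I anticipate is the identification $S_n(T^{\textrm{ex}}) = E_n(T)$ together with the (essentially trivial) observation that separation on $K_n(T)$ restricts to separation on its subset $E_n(T)$; once these are clear, the rest is a direct invocation of the separation criteria in force on each side. Note that no additional completeness hypothesis on $T^{\textrm{ex}}$ is needed beyond its mere existence, since quantifier-elimination and model-completeness are purely approximation properties of formulas rather than assertions about sentence values.
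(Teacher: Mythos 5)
Your proposal is correct and follows essentially the same route as the paper: identify $S_n(T^{\textrm{ex}})$ with $E_n(T)\subseteq K_n(T)$, use Proposition~\ref{separation} to separate any two distinct extreme types by a quantifier-free (resp.\ infimal) affine formula, observe that such a formula is also quantifier-free (resp.\ infimal) in the CL sense, and invoke the CL separation criterion. The extra detail you supply about the lattice version of Stone--Weierstrass is just an elaboration of the paper's appeal to ``the corresponding fact for CL.''
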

\begin{proof}
Let $p,q\in S_n(T^{\textrm{ex}})=E_n(T)$ be distinct. By Proposition \ref{separation},
there is an atomic formula $\theta$ such that $p(\theta)\neq q(\theta)$.
So, using the corresponding fact for CL, we conclude that $T^{\textrm{ex}}$ has quantifier-elimination.
The second part is similar. Assume $p(\phi)\neq q(\phi)$ where $\phi$ is an affine infimal formula.
Then, $\phi$ is infimal in the CL sense too. So, $p$ and $q$ are separated by an infimal formula in the CL sense
and hence $T^{\textrm{ex}}$ is model-complete.
An other way of proving this in the case $T^{\textrm{ex}}$ is first order is as follows.
For first order $M,N\vDash T$, assume $M\subseteq N$.
Then $M\preccurlyeq_{\mathrm{AL}}N$ and hence $M\preccurlyeq_{\mathrm{CL}}N$.
\end{proof}

\begin{theorem} \label{main}
Let $T$ be a complete affine theory such that $T^{\textrm{ex}}$ is first order.

(i) If $T^{\textrm{ex}}$ has quantifier-elimination and every disjunction (resp. conjunction) of atomic formulas is
$T^{\textrm{ex}}$-equivalent to an atomic formula, then $T$ has quantifier-elimination in the AL-sense.

(ii) If $T^{\textrm{ex}}$ is model-complete and every disjunction (resp. conjunction) of atomic formulas is
$T^{\textrm{ex}}$-equivalent to an atomic formula, then $T$ is model-complete in the AL-sense.
\end{theorem}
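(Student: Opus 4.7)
The plan is to apply Proposition \ref{separation} together with Lemmas \ref{atomic values} and \ref{existential values}, using the Bauer simplex structure of $K_n(T)$ to translate separation of affine types into separation of regular Borel probability measures on the first order type space.

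Setting $\mathbb{T} = T^{\textrm{ex}}$, the hypothesis that $T^{\textrm{ex}}$ is a complete first order theory, together with AL-completeness of $T$, forces $T = \mathbb{T}_{\textrm{af}}$. By the fact recalled in the introduction, $\mathbb{T}_{\textrm{af}}$ is Bauer and $E_n(T) = S_n(\mathbb{T})$. Hence every $p \in K_n(T)$ is the barycenter of a unique regular Borel probability measure $\mu_p$ supported on $S_n(\mathbb{T})$, with
\[
p(\phi) = \int_{S_n(\mathbb{T})} \hat{\phi}(u)\, d\mu_p(u)
\]
for every affine formula $\phi(\x)$, where $\hat{\phi}(u)$ is the value of $\phi$ at any realization of the complete first order type $u$ in any model of $\mathbb{T}$ (well-defined by completeness). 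In particular, if $\theta$ is atomic then $\hat{\theta}$ is the $\{0,1\}$-valued indicator of $\{u : \theta \in u\}$ and $p(\theta) = \mu_p(\theta)$; and if $\phi = \inf_{\y}\psi$ with $\psi$ quantifier-free affine, the same integral identity persists with $\hat{\phi}$ real-valued.

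With this dictionary in hand, part (i) is almost immediate. By Proposition \ref{separation} it suffices to check that atomic formulas separate types in $K_n(T)$. Given distinct $p,q \in K_n(T)$, the corresponding measures satisfy $\mu_p \neq \mu_q$; under the hypotheses of part (i), Lemma \ref{atomic values} (or its dual noted after the lemma, in the conjunction case) yields an atomic $\theta$ with $\mu_p(\theta) \neq \mu_q(\theta)$, and the identity $p(\theta) = \mu_p(\theta)$ transports the separation back to affine types. Part (ii) is entirely parallel, with Lemma \ref{existential values} replacing Lemma \ref{atomic values}: distinct measures are separated by some affine infimal $\phi$, whence $p(\phi) \neq q(\phi)$ by the integral identity, and Proposition \ref{separation} gives model-completeness of $T$ in the AL sense.

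The substantive work has already been carried out in Lemmas \ref{atomic values}--\ref{existential values}; the main obstacle in the present statement is simply justifying the passage from affine types to probability measures on $S_n(\mathbb{T})$, and this is handled once and for all by the Bauer property of $\mathbb{T}_{\textrm{af}}$. The conjunction variants introduce no new content beyond invoking the dual forms of the two lemmas remarked upon immediately after their statements.
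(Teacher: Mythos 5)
Your proposal is correct and follows essentially the same route as the paper: both arguments represent distinct affine types $p,q\in K_n(T)$ by regular Borel measures concentrated on $E_n(T)=S_n(T^{\textrm{ex}})$, invoke Lemma \ref{atomic values} (resp.\ Lemma \ref{existential values}) to separate the measures by an atomic (resp.\ affine infimal) formula, and transport the separation back through the integral identity and Proposition \ref{separation}. The only cosmetic difference is that you obtain the representing boundary measures from the Bauer property of $\mathbb{T}_{\textrm{af}}$, whereas the paper cites the Choquet--Bishop--de Leeuw theorem directly (which does not require uniqueness of the representing measure); both suffice.
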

\begin{proof}
(i) By Lemma \ref{atomic values}, every regular Borel probability measure $\mu$ on
$S_n(T^{\textrm{ex}})$ is uniquely determined by its values on atomic formulas.
Let $p,q\in K_n(T)$ be distinct. By the Choquet-Bishop-de Leeuw theorem (see \cite{Alfsen}), $p,q$ are represented
by regular boundary measures, i.e. there are such measures $\mu$ and $\nu$ on $E_n(T)=S_n(T^{\textrm{ex}})$
such that for every affine formula $\phi$
$$p(\phi)=\int_{u\in E_n(T)}\hat\phi(u)d\mu$$ $$q(\phi)=\int_{u\in E_n(T)}\hat\phi(u)d\nu$$
Clearly, $\mu\neq\nu$ and hence there is an atomic formula $\theta$ such that $\mu(\theta)\neq\nu(\theta)$.
We conclude that $p(\theta)=\mu(\theta)\neq\nu(\theta)=q(\theta)$ and hence $T$ has quantifier-elimination.
The conjunction case is similar.

(ii) As in part (i), let $p,q\in K_n(T)$ be distinct types represented by boundary measures $\mu$ and $\nu$ respectively.
By Lemma \ref{existential values}, $\mu$ and $\nu$ differ on an affine infimal formula.
So, $p,q$ differ on that formula. We conclude that $T$ is model-complete. The conjunction case is similar.
\end{proof}
\vspace{1mm}

Let $\mathbb{T}$ be a complete first order theory and set $T=\mathbb{T}_{\textrm{af}}$.
Then $T^{\textrm{ex}}=\mathbb{T}$ and hence

\begin{corollary}
(i) Let $\mathbb{T}$ be a complete first order theory of fields in the language of rings $\{+,-,\times,0,1\}$.
If $\mathbb{T}$ has quantifier-elimination (resp. is model-complete) in the first order sense,
then $\mathbb{T}_{\textrm{af}}$ has quantifier-elimination (resp. is model-complete) in the AL sense.

(ii) Let $\mathbb{T}$ be a complete theory of Boolean algebras in the language $\{\wedge,\vee,\ ',0,1\}$
which has quantifier-elimination (resp. is model-complete).
Then, $\mathbb{T}_{\textrm{af}}$ has quantifier-elimination (resp. is model-complete).
\end{corollary}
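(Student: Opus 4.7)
My plan is to obtain both parts of the corollary as immediate consequences of Theorem \ref{main} applied to $T = \mathbb{T}_{\textrm{af}}$. By the paragraph preceding the corollary, completeness of $\mathbb{T}$ gives $T^{\textrm{ex}} = \mathbb{T}$, so $T^{\textrm{ex}}$ is first order and $T$ is AL-complete, which fits the standing hypothesis of Theorem \ref{main}. What remains in each case is to verify the richness condition of that theorem: every finite disjunction (for part (i)) or finite conjunction (for part (ii)) of atomic formulas is $\mathbb{T}$-equivalent to a single atomic formula. Once this is in place, quantifier-elimination (respectively model-completeness) of $\mathbb{T}$ in the first order sense transfers directly to $\mathbb{T}_{\textrm{af}}$ in the AL sense.

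For part (i), I would first put atomic formulas in the form $p(\x) = 0$ by rewriting $t_1 = t_2$ as $t_1 - t_2 = 0$. Since a field has no zero divisors,
$$p_1(\x) = 0 \ \vee\ p_2(\x) = 0 \ \equiv_{\mathbb T}\ p_1(\x)\cdot p_2(\x) = 0,$$
so iteration reduces any finite disjunction of atomic formulas to $\prod_i p_i(\x) = 0$, which is again atomic. The disjunction case of Theorem \ref{main} then yields the conclusion.

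For part (ii), I would rewrite each atomic formula $s = t$ as $a(\x) = 0$ using symmetric difference, taking $a = (s \wedge t') \vee (s' \wedge t)$. In any Boolean algebra one has
$$a_1(\x) = 0 \ \wedge\ a_2(\x) = 0 \ \equiv_{\mathbb T}\ a_1(\x) \vee a_2(\x) = 0,$$
so any finite conjunction of atomic formulas collapses to a single atomic formula. The conjunction case of Theorem \ref{main} then applies.

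I do not foresee any real obstacle: the substantive content has already been carried out in Lemmas \ref{atomic values} and \ref{existential values} and in Theorem \ref{main}. The present corollary merely records that the languages of rings (for fields) and of Boolean algebras supply the required richness of atomic formulas, each on one side of the $\wedge$/$\vee$ duality; the only point that needs a sentence of care is the reduction of atomic formulas to a single-sided normal form, which is immediate via subtraction in rings and via symmetric difference in Boolean algebras.
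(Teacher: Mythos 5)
Your proposal is correct and follows essentially the same route as the paper: reduce atomic formulas to the one-sided form $t=0$ (via subtraction in rings, via symmetric difference in Boolean algebras --- the paper's $(x\vee y)\wedge(x'\vee y')$ is just the symmetric difference written in conjunctive form), use $fg=0$ for disjunctions over fields and $a_1\vee a_2=0$ for conjunctions over Boolean algebras, and invoke Theorem \ref{main}. The paper's proof is a two-line version of exactly this argument.
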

\begin{proof}
(i): $(f=0)\vee(g=0)$ is equivalent to $fg=0$.

(ii): $x=y$ is equivalent to $(x\vee y)\wedge(x'\vee y')=0$.
Also, $(x=0)\wedge(y=0)$ is equivalent to $(x\vee y)=0$.
\end{proof}

\begin{example}
\em{(i) Algebraically closed fields and finite fields are the only division rings which have quantifier-elimination.
So, the affine part of ACF$_p$ as well as the affine theory of any finite field has quantifier-elimination.
The affine part of RCF is model-complete in the language of rings.

(ii) The theory of existentially closed difference fileds is model-complete but not complete.
So, the affine part of any completion of this theory is model-complete.

(iii) The affine part of the theory of atomless Boolean algebras has quantifier-elimination.
Also, the affine part of the theory of any finite Boolean algebra $B$ has quantifier-elimination.
In the case $B=\{0,1\}$, setting $\mu(x)=d(x,0)$, one has that
$$\mu(x\wedge y)+\mu(x\vee y)=\mu(x)+\mu(x)\ \ \ \ \ \ \ \ \ \ \forall xy.$$
So, in this special case, the affine part is the theory of probability algebras (see also \cite{Bagheri-Lip}).}

(iv) The theory of differential fields of characteristic $0$ (DCF$_0$)
has quantifier-elimination in the language $\{+,-,\times,\delta,0,1\}$.
So, by a similar reasoning, its affine part has quantifier elimination.
Likewise, DCF$_p$ and the theory of $(\Rn,+,\cdot,-,\exp,0,1)$ are model-complete.
So, their affine parts are model-complete.
\end{example}

By Lefschetz principle, ACF$_p$ tends to ACF$_0$ topologically. This property is
transferred to the affine parts. Let $T_p$ be the affine part of ACF$_p$ for $p=0$ or prime.
Since the language of fields is finite, there is a first order sentence $\eta$ such that for every
continuous structure $M$ one has that $M\vDash\eta=1$ if and only if $M$ is first order.
The following is then an affine variant of the Lefschetz principle.

\begin{proposition} \label{limit theory}
Let $\phi$ be an affine sentence in the language of fields. Then the following are equivalent:

(i) $T_0\vDash0\leqslant\phi$

(ii) For each $\epsilon>0$, there is $n$ such that for all $p\geqslant n$,\ \ $T_p\vDash-\epsilon\leqslant\phi$

(iii) For each $\epsilon>0$, there are arbitrarily large $p$ such that $T_p\vDash-\epsilon\leqslant\phi$
\end{proposition}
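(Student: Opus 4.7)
The plan is to translate (i), (ii), (iii) into statements about a single real sequence $(r_p)$ indexed by $p\in\{0\}\cup\{\textrm{primes}\}$ and then establish convergence $r_p\to r_0$ as $p\to\infty$ through primes. Since $T_p$ is AL-complete (being the affine part of the complete first order theory $\mathrm{ACF}_p$), for each affine sentence $\phi$ there is a unique real $r_p$ with $T_p\vDash\phi=r_p$. Then (i) says $r_0\geqslant 0$, (ii) says $\liminf_p r_p\geqslant 0$, and (iii) says $\limsup_p r_p\geqslant 0$; clearly (ii) implies (iii), and once $r_p\to r_0$ is shown one has $\liminf_p r_p=\limsup_p r_p=r_0$, making all three conditions equivalent.

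To prove this convergence it suffices to verify $\lim_\mathcal{U} r_p=r_0$ for every non-principal ultrafilter $\mathcal{U}$ on the set of primes. Indeed, any affine sentence has a syntactic bound independent of the structure, so $(r_p)$ is bounded, and a failure of $r_p\to r_0$ would be witnessed by an infinite set on which $|r_p-r_0|\geqslant\epsilon$, which can be extended to a non-principal ultrafilter whose associated limit then differs from $r_0$. Fix such $\mathcal{U}$, identify it with the corresponding $0$--$1$ valued finitely additive probability measure $\mu$ on the primes, and choose first order algebraically closed fields $M_p\vDash\mathrm{ACF}_p$, regarded as AL-structures with the discrete metric. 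Form the ultramean $N=\prod_\mu M_p$. Because the metric is discrete, this ultramean collapses to the classical ultraproduct of the $M_p$, and by the usual Lefschetz/\,compactness argument $N$ is an algebraically closed field of characteristic $0$. In particular $N\vDash T_0$, so $\phi^N=r_0$. On the other hand the ultramean identity $\phi^N=\int\phi^{M_p}\,d\mu$ together with $\phi^{M_p}=r_p$ gives $\phi^N=\lim_\mathcal{U} r_p$, so $\lim_\mathcal{U} r_p=r_0$.

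The most delicate step is the compatibility of the two descriptions of $N$: one has to check that treating each $M_p$ as an AL-structure with discrete metric makes the ultramean construction collapse to the classical ultraproduct, so that the first order Lefschetz principle is genuinely available inside the affine framework. Once this is settled, the rest is the uniform boundedness of $(r_p)$ and the standard ultrafilter criterion for convergence of bounded real sequences, which immediately upgrade the ultrafilter-limit statement to $r_p\to r_0$ and close the three implications.
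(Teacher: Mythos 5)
Your proof is correct, but it takes a genuinely different route from the paper's. The paper argues syntactically: for (i)$\Rightarrow$(ii) it uses affine compactness (Lemma~\ref{proofs}) to extract a single condition $0\leqslant\sigma$ from the affine closure of $T_0$ with $0\leqslant\sigma\vDash-\tfrac{\epsilon}{2}\leqslant\phi$, perturbs it to $-\delta\leqslant\sigma\vDash-\epsilon\leqslant\phi$, finds a first order sentence $\eta\in\mathrm{ACF}_0$ with $\eta=1\vDash-\delta\leqslant\sigma$, and then invokes the classical Lefschetz principle to get $\mathrm{ACF}_p\vDash\eta=1$ for large $p$; the direction (iii)$\Rightarrow$(i) is by contraposition via the same mechanism. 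You instead work semantically: you encode everything in the bounded sequence $r_p$ (using AL-completeness of each $T_p$), and prove $r_p\to r_0$ by showing that every ultramean of first order models $M_p\vDash\mathrm{ACF}_p$ along a non-principal $0$--$1$ valued measure collapses to the classical ultraproduct, hence is a model of $\mathrm{ACF}_0$ and so of $T_0$, forcing $\lim_{\mathcal U}r_p=r_0$. All the steps you flag check out: the collapse to the classical ultraproduct is immediate from the definition of the ultramean metric when $\mu$ is $\{0,1\}$-valued and the $M_p$ are discrete, affine sentences are uniformly bounded by a syntactic constant, and the ultrafilter criterion for convergence of bounded sequences is standard. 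What each approach buys: the paper's argument is self-contained within the affine proof machinery it has already set up, reduces directly to the first order Lefschetz principle, and makes visible which finitely many axioms of $T_0$ are responsible for a given consequence; your argument is shorter and conceptually cleaner, and in fact proves the stronger statement $r_p\to r_0$, but it leans on the fundamental theorem for ultrameans and on choice. Note that the paper states the ultramean transfer ($\prod_\mu M_p\vDash T_0$ for diffuse ultracharges $\mu$ and arbitrary $M_p\vDash T_p$) as a \emph{consequence} of the proposition, whereas you use a special case of it (principal-free $0$--$1$ measures, first order $M_p$) as the engine of the proof; the two directions of derivation are consistent, and your special case is genuinely easier since it only needs \L o\'s's theorem rather than the affine theory.
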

\begin{proof}
(i)$\Rightarrow$(ii): Given $\epsilon>0$, there is a condition $0\leqslant\sigma$ in $T_0$ such that
$0\leqslant\sigma\vDash-\frac{\epsilon}{2}\leqslant\phi$. So, there is a $\delta>0$ such that
$-\delta\leqslant\sigma\vDash-\epsilon\leqslant\phi$.
There is also a first order sentence $\eta\in ACF_0$ such that $\eta=1\vDash-\delta\leqslant\sigma$.
So, $\eta=1\vDash-\epsilon\leqslant\phi$. Let $n$ be such that $ACF_p\vDash\eta=1$ for every $p\geqslant n$.
Then, $ACF_p\vDash-\epsilon\leqslant\phi$ and hence $T_p\vDash-\epsilon\leqslant\phi$.

(ii)$\Rightarrow$(iii): Obvious.

(iii)$\Rightarrow$(i): Assume not. Then, there is $\epsilon>0$ such that $T_0\vDash\phi\leqslant-\epsilon$.
So, by the above argument, there is $n$ such that for all $p\geqslant n$, \ \
$T_p\vDash\phi\leqslant-\frac{1}{2}\epsilon$. This is a contradiction.
\end{proof}

An easy consequence of Proposition \ref{limit theory} is that if $\mu$ is an ultracharge
on the set $P$ of prime numbers such that $\mu(p)=0$ for every $p\in P$ and $M_p\vDash T_p$,
then $\prod_\mu M_p\vDash T_0$.

It was proves in \cite{Bagheri-Lip} that an affine theory in the pure language of metric spaces
having a classical model of cardinality at least $3$ can not have quantifier-elimination.
Since, the first order theory of infinite sets has quantifier-elimination,
we conclude that the conditions mentioned in Theorem \ref{main} (i) can not be dropped.

In all examples given above the language has no relation symbol other than equality.
There are many known examples of first order theories having additional relation symbols.
In such cases, the diversity of atomic formulas is highly increased making the situation hard.
As a final example, we prove that the affine part of the theory of
ordered divisible Abelian groups (ODAG) has quantifier-elimination.
For this purpose, we replace $\leqslant$ with the lattice operations
and show that any conjunction of atomic formulas is equivalent to an atomic one.
\bigskip

\noindent{\bf Affine ODAG}\\
Let $L=\{+,-,\wedge,\vee,0\}$ and $\mathbb T$ be the first order theory consisting of the following axioms:

(A1) Axioms of non-trivial torsion-free divisible Abelian groups (DAG)

(A2) Axioms of distributive lattices

(A3) $-(x\wedge y)=-x\vee -y$

(A4) $z+(x\wedge y)=(z+x)\wedge(z+y)$

(A5) $(x\wedge y=x)\vee(x\wedge y=y)$.
\bigskip

Let $M=(A,+,-,\wedge,\vee,0)$ be a first order model of $\mathbb T$. Set $x\leqslant y$ if $x\wedge y=x$.
Then $\bar{M}=(A,+,-,0,\leqslant)$ is a model of ODAG. Conversely, every model of ODAG is obtained in this way.
Let $M\subseteq N$ be models of $\mathbb T$. Then, $\bar{M}\subseteq\bar{N}$
and hence $\bar{M}\preccurlyeq\bar{N}$ by the model-completeness of ODAG.
Therefore, $M\preccurlyeq N$, since $\leqslant$ and lattice operations are bi-definable.
We conclude that $\mathbb T$ is model-complete (as well as complete).
Indeed, it has quantifier-elimination since it has algebraically prime models.
Note also that for $L$-terms $t_1,t_2$ one has that
$$(t_1=0)\wedge(t_2=0)\ \ \equiv\ \ (t_1\wedge-t_1\wedge t_2\wedge-t_2)=0.$$
By Theorem \ref{main}, the affine part of $\mathbb T$ has quantifier-elimination.

Let $M\vDash\mathbb{T}_{\textrm{af}}$ and define $x\leqslant y$ if $x\wedge y=x$.
This is a partial ordering on $M$.
A set $X\subseteq M^n$ is \emph{definable} if $d(\x,X)$ is the uniform limit of a sequence
$\phi^M_k(\x)$ of formulas (with parameters).
Let $$X=\{(x,y)\in M^2:\ x\leqslant y\}.$$
Then, members of $X$ are exactly pairs of the form $(x\wedge y,y)$ and
$$d((a,b),X)=\inf_{xy} d((a,b),(x\wedge y,y))$$
So, $X$ is definable. For every $a\leqslant b$, let $$[a,b]=\{t:\ a\leqslant t\leqslant b\}.$$
The intervals $[a,\infty)$, $(-\infty,b]$ and $(\infty,\infty)=M$ are defined similarly.
All these intervals are definable. For example,
$$d(x,[a,b])=\inf_t d(x,(t\vee a)\wedge b)=|(a-x)\vee(x-b)\vee0|$$
$$d(x,[a,\infty))=\inf_t d(x,t\vee a)=|(a-x)\vee0|.$$
Where $|x|=d(x,0)$. An axiomatization for $\mathbb{T}_{\textrm{af}}$ contains at least the following list of axioms
(denoted by $T$) where $x^+=x\vee0$ and $x^-=(-x)^+$.
\bigskip

(A1)-(A4) above

(A6) $|x-y|=d(x,y)$

(A7) $\sup_x|x\vee0|=1$,

(A8) $|x\wedge y|+|x\vee y|=|x|+|y|$

(A9) $|nx\wedge y|=|x\wedge y|$ \ \ \ \ \ for $n\geqslant1$

(A10) $|(x\vee y)^+|=|x^+\vee y^+|=|x^++y^+|$

(A11) $\inf_x|nx-y|=0$ \ \ \ \ \ for $n\geqslant1$ \ \ (divisibility)

(A12) $\inf_x\sum_{i=1}^n|x\wedge y_i|=\sum_{i=1}^n|0\wedge y_i|$

(A13) $\inf_t(|t\wedge x|-|t\wedge y|)=|0\wedge x|-|0\wedge y|+|y^+|-|y^+\wedge(x^+ + x^-)|$
\bigskip

Note that $\Qn$ satisfies all these axioms. A proof of quantifier-elimination for $T$
needs (A12)-(A13) and even more complicated axioms.

\begin{lemma}
Let $\a\in M\vDash T$. Then, every quantifier-free formula $\phi(x,\a)$ is
approximated by a formula of the form $\sum_{i=1}^nr_i\phi_i$ where $r_i\in\Qn$ and $\phi_i$
is one of forms $|x+a|$, $|(x+a)\wedge b|$, $|(x+a)\wedge(-x+b)|$ and $|(x+a)\wedge(-x+b)\wedge c|$ with $a,b,c\in M$.
\end{lemma}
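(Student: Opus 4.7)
The strategy is to normalize, inside each atomic summand $|t|$ of the quantifier-free formula $\phi(x,\a)$, the term $t(x,\a)$ through successive rewrites until $t$ takes one of the four required shapes. Any quantifier-free affine formula is already a $\Qn$-linear combination of expressions $|t(x,\a)|$ (plus a constant), so it suffices to handle a single $|t|$.

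\emph{Stage 1 (Flattening into disjunctive normal form).} By (A3), (A4), and distributivity of the lattice in (A2), one can push $+$ and $-$ past $\wedge$ and $\vee$. Iterating gives, up to $T$-equivalence, a lattice polynomial $t\equiv\bigvee_i\bigwedge_j(n_{ij}x+a_{ij})$ with $n_{ij}\in\Zn$ and $a_{ij}\in M$. A further application of (A4) collapses, within any single meet, atoms sharing a common coefficient: $\bigwedge_{\{j:n_j=n\}}(nx+a_j)=nx+\bigwedge_{\{j:n_j=n\}}a_j$, leaving at most one atom per distinct integer $x$-coefficient.

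\emph{Stage 2 (Replacing joins by meets via modularity).} Axiom (A8), $|u\wedge v|+|u\vee v|=|u|+|v|$, is a modularity identity for $|\cdot|$; by induction on the arity and using distributivity of $\wedge$ over $\vee$, one obtains the inclusion--exclusion formula
\[\Big|\bigvee_{i\in I}u_i\Big|\ =\ \sum_{\emptyset\neq J\subseteq I}(-1)^{|J|+1}\Big|\bigwedge_{j\in J}u_j\Big|.\]
Applied to the disjunctive form of $t$, this writes $|t|$ as a finite $\Zn$-linear combination of values $|\bigwedge_{j\in J}m_j|$; the inner expression is again a finite meet of linear atoms, which by Stage 1 collapses to one atom per distinct $x$-coefficient.

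\emph{Stage 3 (Coefficient reduction and final shape).} It remains to treat a meet $\bigwedge_k(n_kx+a_k)$ of integer-coefficient atoms. For an isolated atom $|nx+a|$ with $n\neq 0$, divisibility (A11) produces $b\in M$ with $nb$ arbitrarily close to $a$, and then $|nx+a|\approx|n|\cdot|x+b|$, a scalar multiple of the form $|x+b|$. For several distinct nonzero coefficients coexisting in one meet, one performs the same divisibility approximation atom by atom and invokes (A4), (A9), and (A10) to pass to a meet whose surviving atoms have coefficient in $\{-1,0,+1\}$; a last application of (A4) then reduces the meet to at most three atoms, namely one $(x+\alpha)$, one $(-x+\beta)$, and one constant $\gamma$. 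A short case analysis (swapping signs when only a lone $(-x+\beta)$ or $(-x+\beta)\wedge\gamma$ is present, via (A8)) confirms that the result is one of the four shapes $|x+a|$, $|(x+a)\wedge b|$, $|(x+a)\wedge(-x+b)|$, $|(x+a)\wedge(-x+b)\wedge c|$.

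\emph{Main obstacle.} The delicate step is Stage 3 when a single meet mixes atoms with several distinct nonzero coefficients, since one cannot pull a common scalar out of $|\bigwedge_k(n_kx+a_k)|$. The approximation-based resolution relies on (A6), which makes $d$ $1$-Lipschitz in each coordinate, so that parameter approximations from (A11) propagate to small uniform errors in $|\bigwedge_k(\cdot)|$; the identities (A9), (A12), and (A13) supply the precise collapses needed to land exactly in the four target shapes within a prescribed $\epsilon$.
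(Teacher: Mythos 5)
Your overall route is the same as the paper's (which is very terse): reduce $\phi$ to a rational combination of expressions $|\bigwedge_j(n_{ij}x+a_{ij})|$ --- your Stages 1--2, via the lattice normal form for terms and the inclusion--exclusion consequence of (A8), fill in what the paper dismisses with ``using the axioms'' --- then use divisibility (A11) to replace $nx+a$ by $n(x+b)$ with $nb$ close to $a$, use (A9) to force the coefficients into $\{-1,0,1\}$, and finish with a case analysis. The paper assumes $M$ is $\aleph_0$-saturated so that $b=\frac{a}{n}$ exists exactly, whereas you work with approximations propagated through the Lipschitz control coming from (A6); both are acceptable since the conclusion is only an approximation statement.

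There is, however, one step that fails as written: in Stage 3 you claim $|nx+a|\approx|n|\cdot|x+b|$. Since $|\cdot|=d(\cdot,0)$ takes values in $[0,1]$, an identity $|nz|=|n|\,|z|$ cannot hold; what the axioms actually force is $|nz|=|z|$ with no scalar factor. Indeed, (A9) with $y=nx$ gives $|nx|=|x\wedge nx|=|x|$ for $x\geqslant0$, and the general case follows from $|z|=|z^{+}|+|z^{-}|$, which is (A8) applied to $z$ and $0$ together with $|-w|=|w|$ (a consequence of (A6)). Concretely, in the discrete model $\Qn$ one has $|n(x+b)|=1=|x+b|$ whenever $x+b\neq0$, not $|n|$; so your approximating formula, while of the admissible syntactic shape, would differ from $\phi$ by the factor $|n|$ on those summands and would not approximate it. The fix is simply to drop the factor: after the divisibility step, (A9) and the consequence $|nz|=|z|$ strip the integer coefficients outright, which is exactly what the paper means by ``By (A9), we may assume $n_{ij}=-1,0,1$.'' With that correction the rest of your argument, including the sign swap for a lone $-x+\beta$ or $(-x+\beta)\wedge\gamma$ via (A3), (A6) and (A8), goes through and matches the paper's intended case analysis.
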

\begin{proof}
We may assume $M$ is $\aleph_0$-saturated. Then, using the axioms, $\phi$ is approximated by a formula of the form
$$\sum_ir_i\cdot|\bigwedge_{j}(n_{ij}x+a_{ij})|$$ where $r_i\in\Qn$, $n_{ij}\in\Zn$ and $a_{ij}\in M$.
By (A11), we may write $nx+a=n(x+\frac{a}{n})$. By (A9), we may assume $n_{ij}=-1,0,1$.
Then, considering different cases, we obtain the possibilities mentioned in the lemma.
\end{proof}

\begin{proposition}
Let $M\vDash T$ be $\aleph_0$-saturated. Then, $\Qn\subseteq M$. If $M\vDash\mathbb{T}_{\textrm{af}}$, then $\Qn\preccurlyeq M$.
\end{proposition}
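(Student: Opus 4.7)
The plan is to construct an embedding $\iota\colon\Qn\hookrightarrow M$ using $\aleph_0$-saturation and the divisibility axiom, verify it preserves the lattice, group and metric structure, and then invoke the quantifier-elimination of $\mathbb{T}_{\textrm{af}}$ already established in this section for the elementarity.

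First I would produce a positive unit: by (A7), $\sup_x|x\vee 0|=1$, so by $\aleph_0$-saturation there is $e\in M$ with $|e\vee 0|=1$; set $\mathbf{1}:=e\vee 0\geqslant 0$, so $|\mathbf{1}|=1$. Axiom (A11) together with saturation makes $M$ divisible, and (A1)--(A4) make $M$ a lattice-ordered abelian group, hence torsion-free by the standard argument for abelian $\ell$-groups (writing $x=x^+-x^-$ with $x^+\wedge x^-=0$ and using that multiplication by a positive integer is a lattice endomorphism, $nx=0$ forces $nx^+=nx^-=0$, whence $x^+=x^-=0$ because $nu\geqslant u\geqslant 0$ for $u\geqslant 0$). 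So for every $n\geqslant 1$ there is a unique $b_n\in M$ with $nb_n=\mathbf{1}$, and $\iota(p/n):=pb_n$ defines a well-defined injective group homomorphism $\Qn\to M$.

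Next I would check that $\iota$ is a lattice and metric embedding. Because $\mathbf{1}\geqslant 0$, and in any torsion-free abelian $\ell$-group $ny\geqslant 0\Rightarrow y\geqslant 0$ (writing $y=y^+-y^-$, the inequalities $ny^+\geqslant ny^-$ and $ny^+\wedge ny^-=0$ force $ny^-=0$, whence $y^-=0$), the map $\iota$ preserves positivity and hence the linear order on $\Qn$; its image is therefore totally ordered inside $M$, which yields $\iota(p\wedge q)=\iota(p)\wedge\iota(q)$ and dually for $\vee$. For the metric, iterating (A10) on the positive cone gives $|nu|=|u|$ for every $u\geqslant 0$, so $|n\mathbf{1}|=|\mathbf{1}|=1$ and $|nb_k|=|b_k|$; combined with $nb_k=\mathbf{1}$ this forces $|b_k|=1$, hence $|\iota(q)|=1$ for every nonzero $q\in\Qn$, which matches the value $|q|_\Qn$ already pinned down inside $\Qn$ by the same axioms. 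This establishes $\Qn\subseteq M$ as affine substructures.

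For the second claim, Theorem \ref{main}(i) (using the reduction $(t_1=0)\wedge(t_2=0)\equiv(t_1\wedge-t_1\wedge t_2\wedge-t_2)=0$ noted above) already supplies quantifier-elimination for $\mathbb{T}_{\textrm{af}}$, and in any theory with QE a substructure inclusion is automatically elementary: any affine formula is uniformly approximated by quantifier-free ones, which are preserved under substructure inclusion. Hence $\Qn\preccurlyeq M$. The main technical obstacle is the norm-preservation step --- one has to verify that the axioms, in particular (A7) and (A10), force the norm to take value $1$ at every nonzero rational both inside $\Qn$ and inside $M$, so that the algebraic embedding $\iota$ is actually an isometry; the remaining lattice-theoretic and model-theoretic steps are routine.
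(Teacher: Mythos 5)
Your proof is correct and takes essentially the same route as the paper's: use (A7) (with saturation) to produce a positive element $a$ with $|a|=1$, map $r\mapsto ra$ (divisibility and torsion-freeness making this a well-defined order-preserving isometric $L$-embedding of $\Qn$), and then get elementarity from the quantifier-elimination/model-completeness of $\mathbb{T}_{\textrm{af}}$ established earlier in the section. The paper's proof is simply a terser statement of this same argument, and the extra verifications you supply (torsion-freeness, $|nu|=|u|$ for $u\geqslant 0$ via (A10), well-definedness of $ra$) are accurate.
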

\begin{proof}
By (A7), there is $a\in M$ such that $0<a$ and $|a|=1$.
We have therefore that $ra\leqslant sa$ whenever $r\leqslant s$ are rationals.
We conclude that $r\mapsto ra$ is an (isometric) $L$-embedding from $\Qn$ into $M$.
If $M\vDash\mathbb{T}_{\textrm{af}}$, this is an elementary embedding by model-completeness.
\end{proof}
\bigskip

One proves in a similar way that the affine part of the first order theory of
the structure $(\Rn,+,\times,-,\wedge,\vee,0,1)$ has quantifier-elimination.
The affine part of ODAG (or its subtheories) is some sense the bounded version of the theory of Banach lattices.
Likewise, the affine parts of DAG and RCF/ACF have similarities with Banach spaces
and real/complex Banach algebras respectively.

The classical theories stated above are all decidable. So, their affine parts are decidable as well.
This is because affine formulas with integer coefficients form a computable part of the
classical formulas (if they are presented in the Riesz space form).
On the other hand, giving a precise axiomatization of these affine parts is not so easy.
So, we leave open the following question.
\bigskip

\noindent{\bf Question:} Find a complete axiom system for the affine part of classical theories
such as ODAG, ACF$_p$ and RCF.


\begin{thebibliography}{5}
\bibitem{Alfsen} E.M. Alfsen, \emph{Compact convex sets and boundary integrals}, Springer-Verlag (1971).
\bibitem{Bagheri-Lip} S.M. Bagheri, \emph{Linear model theory for Lipschitz structures}, Arch. Math. Logic 53:897-927 (2014).
\bibitem{ACMT} S.M. Bagheri, \emph{Elements of affine model theory}, arXiv:2408.03555v2.
\bibitem{Bagheri-Extreme} S.M. Bagheri, \textit{Extreme types and extremal models}, Annals of Pure and Applied Logic 175 (7), 103451 (2024).
\bibitem{BBHU} I. Ben-Yaacov, A. Berenstein, C.W. Henson, A. Usvyatsov,
\textit{Model theory for metric structures}, Model theory with Applications
to Algebra and Analysis, volume 2 (Zoe Chatzidakis, Dugald Macpherson,
Anand Pillay, and Alex Wilkie, eds.), London Math Society Lecture Note Series,
vol. 350, Cambridge University Press, (2008), pp. 315-427.
\bibitem{CK} C.C. Chang and H.J. Keisler, \textit{Continuous model theory}, Princeton University Press (1966).
\bibitem{Ibarlucia} I. Ben-Yaacov, T. Ibarluc\'{i}a, T. Tsankov,
\textit{Extremal models and direct integrals in affine logic}, Preprint arXiv:2407.13344 (2024).
\bibitem{Jacobson} N. Jacobson, \emph{Basic algebra I}, Second edition (1985).
\bibitem{W.Kantor} W. M. Kantor, \emph{On incidence matrices of finite projective and affine spaces},
Math. Z. 124, 315-318, $\copyright$ by Springer-Verlag (1972).
\bibitem{Marker} D. Marker, \emph{Model theory, an introduction}, Springer-Verlag (2002).
\end{thebibliography}
\end{document}